\pgfplotsset{compat=1.8}
\date{}
\newcommand{\om}{\omega}
\newtheorem{thm}{THEOREM}[section]
\newtheorem{prop}[thm]{PROPOSITION}
\newlength{\myheig}
\newenvironment{hanglist} 
{\begin{list}{}{\setlength{\itemsep}{4pt}
			\parindent 0pt  \setlength{\parsep}{0pt}\setlength{\leftmargin}{+25pt}
			\setlength{\itemindent}{-\parindent}}}{\end{list}}
\begin{document}

\title{UV mission planning under uncertainty in vehicles' availability
}


\author{Saravanan Venkatachalam      \and
        Jonathon M. Smereka 
}


\institute{S. Venkatachalam \at
             Department of Industrial and Systems Engineering, \\
             Wayne State University, 
             4815 Fourth Street, Detroit, Michigan. \\
             Tel.: +1-313-577-1821\\
             \email{saravanan.v@wayne.edu}           
           \and
           J. M. Smereka \at
           Ground Vehicle Robotics (GVR) team, \\
           U.S. Army CCDC Ground Vehicle Systems Center (GVSC), \\
           6305 E Eleven Mile Rd, Warren, Michigan.\\
           \email{jonathon.m.smereka.civ@mail.mil}           
}

\date{Received: date / Accepted: date}

\maketitle

\begin{abstract}
Heterogeneous unmanned vehicles (UVs) are used in various defense and civil applications. Some of the civil applications of UVs for gathering data and monitoring include civil infrastructure management, agriculture, public safety, law enforcement, disaster relief, and transportation. This paper presents a two-stage stochastic model for a fuel-constrained UV mission planning problem with multiple refueling stations under uncertainty in availability of UVs. Given a set of points of interests (POI), a set of refueling stations for UVs, and a base station where the UVs are stationed and their availability is random, the objective is to determine route for each UV starting and terminating at the base station such that overall incentives collected by visiting POIs is maximized. We present an outer approximation based decomposition algorithm to solve large instances, and perform extensive computational experiments using random instances. Additionally, a data driven simulation study is performed using robot operating system (ROS) framework to corroborate the use of the stochastic programming approach.  

\keywords{Mission planning \and Stochastic programming \and Two-stage stochastic model \and Orienteering \and UV \and ROS \and L-shaped method}
\end{abstract}

\section{Introduction}
\indent Advances in wireless networks, sensing, and robotics have led to various applications for Unmanned Vehicles (UVs). Crop monitoring \cite{Thomasson_2001,Willers_2005,Willers_2008,Willers_2009a}, forest fire monitoring \cite{casbeer}, ecosystem management \cite{pollutant, wildfire}, ocean bathymetry \cite{Ferreira}  are some of the environmental sensing applications. Similarly, disaster management \cite{disastermanagement} and  border surveillance \cite{searchandrescue, Krishna2012cdc} are some of the civil security applications. UVs are frequently used by these applications to collect data such as visible/infra-red/thermal images, videos of specified points of interests (POIs) or designated waypoints, and environmental data such as temperature, moisture, humidity using onboard sensors, and deliver them to a base station. For military operations, combat zones pose important challenges \cite{chgral,zaloga2011unmanned,krishnamoorthy2012uav}, hence the missions like  intelligence, surveillance and reconnaissance (ISR) are critical. Very effective and efficient information collection mechanisms are required for successful ISR missions, and UVs are an important asset for ISR. Additionally, UVs play vital role in search and rescue, target engagements, environmental mapping, disaster area surveying, and mapping and convoy operations for resupply missions. Also, UVs are preferred over other collection resources in instances like unsuitable terrain, harsh and hostile environment, and also in tedious information collection processes.

\indent Even though there are a lot of advantages in using UVs, they have limited payload capacity and distance range. This requires the UVs to make multiple stops at the refueling or recharging stations before they can complete their entire mission. The stops are also required for other purposes like security halt or mission handover. Also, in some applications, the total allotted time for a mission is also limited. This study focuses on fundamental questions related to changes in the availability of UVs during the course of a mission. For example, based on the information collected by the UVs at the POI sites, surveillance mission objectives may change or new tasks may be added to a sub-group of UVs or sensors. The mission adjustments may, for instance, result in attempting to assign an asset that is not currently available. If there is a chance that an UV is not available after a certain time window, it may not be beneficial to assign tasks for an UV that requires much longer to travel than the time window. Additional challenges include changes in expected terrain, obstacles which restrict movement, or asset failures, and all of these can result in uncertainty in the availability of UVs. An example in a military application is that when a ground UV is used in a hostile terrain with improvised explosive devices (IEDs), and conducting anti-IED sweeps or explosive ordinance disposal can lead to compromises or casualties in UV assets. Similarly, sensor failures or terrain incompatibilities for the UVs may also lead to their unavailability during a mission.\par  

\indent A mission routing plan provides a high-level detail on the sequence of POIs and refueling stations it has to visit. Due to their limited range and uncertainties in their availability during a mission, it is critical to formulate and efficiently solve the mission routing problem to successfully harness the benefits of the UVs. These problems are combinatorial in nature, and NP-hard problems such as multiple traveling salesman problem (TSP) and distance or capacity constrained vehicle routing problem are special cases of the UV mission planning problem described in this work. The scope of this paper is to develop high-level mission planning algorithm for the UVs with uncertainty in their availability during the mission. These problems are also referred to as routing problems \cite{toth2002vehicle} in the literature. Since they are hard to solve, and NP-hard in general, mission planning problems are solved offline before the start of a mission. Using the high-level routing plan for each UV, the low-level \textit{`path planning algorithms'} solve the challenge of finding an optimal trajectory between a pair of source and destination while considering obstacles, and provide closed-loop control signals to each UV so that they can follow the trajectory with minimum deviations \cite{de2020global, chen2015path, xia2020cooperative}. Due to trackability reasons, the complexities in low-level planning are not considered in the high-level planning. Potential field methods, $A^{*}$ search algorithm, and Rapidly-exploring Random Tree (RRT) \cite{latombe2012robot} are a few of the commonly used low-level online path planning algorithms. The readers are referred to \cite{yu2015sense} for a review of other state-of-the-art low-level path planning algorithms. 

From this brief description, the following four points summarize the main contributions of this work: (i) a risk-neutral two-stage stochastic programming model to obtain the routes for each of the UVs with fuel constraints and uncertainties in the availability of UVs; (ii) a reformulation for the two-stage stochastic model; (iii) an exact method using branch-and-cut procedure to solve the two-stage stochastic model to optimality; and (iv) extensive computational experiments using random instances, and simulation studies using robot operating system (ROS) as a middleware to corroborate the efficiency of the proposed approach, both quantitatively and qualitatively.

The remainder of the paper is presented as follows: description of the problem and literature review are presented in Section \ref{prob}; the details of two-stage stochastic programming model are given in Section \ref{model}; a branch and cut decomposition algorithm for solving the instances of two-stage  stochastic programming model is proposed in Section \ref{alog}; computational experiments for the proposed algorithm are performed using random instances in section \ref{comp}, and furthermore, perform data driven simulation study to corroborate the significance of stochastic model compared to its deterministic counterpart using a simulation package in robot operating system (ROS); finally, conclusions are presented in Section \ref{con}.

\section{Problem Description and Literature Review}\label{prob}

\indent The fundamental objective is to assign POIs to the UVs while incorporating possible changes in their availability of some resources in the future that could impact the ability of the team to complete the mission. These problems which can be formulated as two-stage stochastic mission planning problems are new and have not been considered in the literature. The problem is stated as follows: given a team of UVs and a subset of the team is randomly available for the mission, and a set of POI sites to visit, find a mission for each UV such that POIs are visited within a given duration, and an objective  based on the incentives of POIs visited by the UVs is maximized. For example, a typical and useful objective to address is the information collected along the traversed missions. In the absence of changes in the input data or resources, the problems considered in this research are already NP-Hard and computationally challenging to solve. In the presence of uncertainties, solving these problems require developing novel computational tools in an interdisciplinary area of research spanning combinatorial and stochastic optimization. Fig. \ref{stochastic} illustrates the effect of considering uncertainty in the availability of UVs. Fig. \ref{stochastic} (a) denotes the optimal solution for a deterministic UV mission planning problem which is sub-optimal for the UVs while considering uncertainty for the availability of UVs. Fig. \ref{stochastic} (b-c) are the optimal solutions for stochastic UV mission-planning instances having different chances of availability for UV2. As the figures denote, when the chances of the availability of UV2 decreases, the number of assigned POIs to UV2 also decreases. \vspace{\myheig}

\begin{figure}[h!]
	\centering
	\includegraphics[scale=0.40]{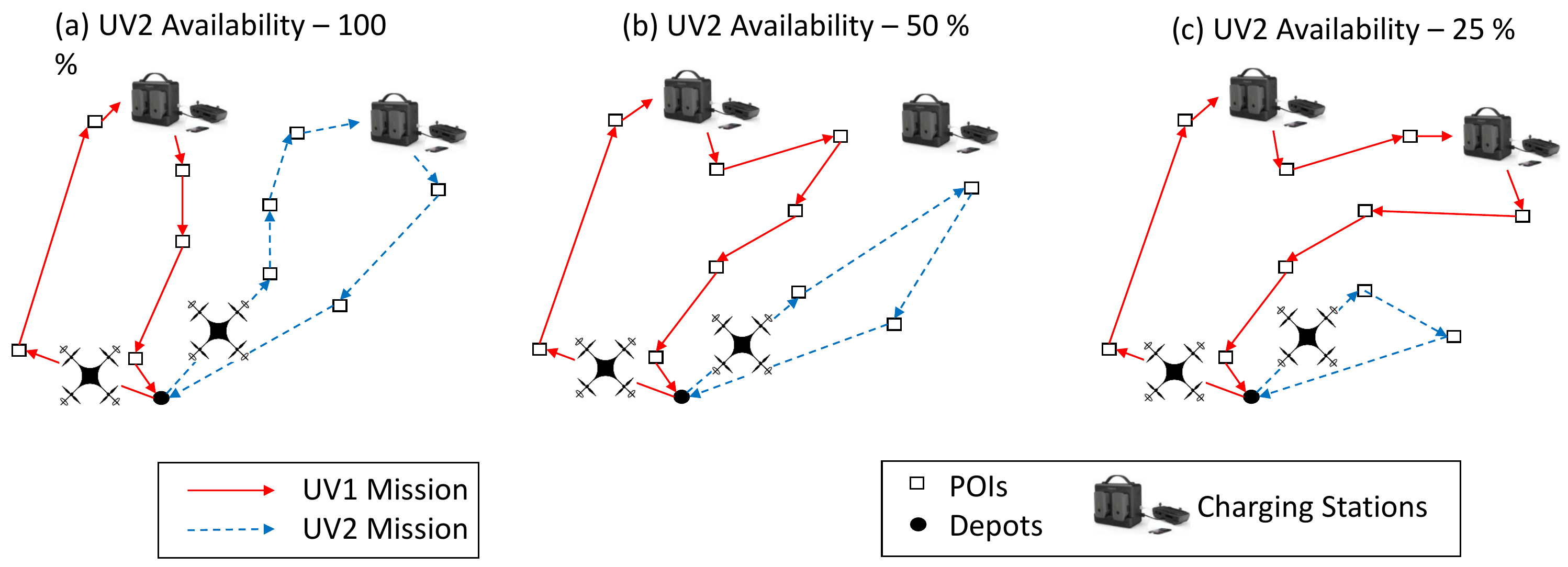}
	\caption{An illustration for considering uncertainty in the availability of UVs with fuel-constraints}
	\label{stochastic}
\end{figure}

\indent A survey on motion planning techniques for UVs under uncertainty is given in 
\cite{dadkhah2012survey}. We refer to the basic problem considered in this study as the UV mission planning problem with stochastic vehicle availability (SVA). A variation of deterministic SVA version reduces to a generalization of an orienteering problem (OP) which possesses the characteristics of both the well-known traveling salesman problem (TSP) and knapsack problem (KP). OP and TSP are NP-Hard \cite{golden1987orienteering, Applegate:2007,Held1970}. OP has received a good attention in the literature and a survey on OP is given in \cite{vansteenwegen2011orienteering}. Though there are a lot of practical relevance for stochastic OP, literature is very limited. The authors in \cite{tang2005algorithms} present a chance constrained stochastic program with an objective to maximize rewards collected by visiting POIs, and the probabilistic constraints are used  for the total actual mission duration. The authors have presented exact and heuristic methods. A robust optimization model with uncertainty in fuel usage between POIs is presented in \cite{evers2014robust}. The author presented different types of uncertainty sets for fuel usages, and also demonstrated the agility in solutions of robust OP. In \cite{ilhan2008orienteering}, OP with stochastic profits is considered and a mission is determined within a prescribed time limit. OP with stochastic travel and service times is presented in \cite{campbell2011orienteering}. An integer L-shaped algorithms for OP with uncertainty in travel and service time is presented in \cite{teng2004integer}. The model considers a soft capacity constraint so a penalty is paid for exceeding a given time limit. The authors in \cite{evers2014two} present a two-stage stochastic model for OP with uncertainty in stochastic weights, and sample average approximation with a heuristic is used as a methodology. The problem setup is single depot with no refueling option. Accordingly, to the best of our knowledge, while there have been several approaches to influence resulting mission based on asset availability given some set of constraints when visiting POIs, there is no multi-depot refueling OP problems with uncertainty in availability of vehicles directly addressed as part of the optimization. That is, in this work we are expanding the space of OP solutions when using multiple vehicles, multiple depots, and uncertain availability of UVs.

When the availability of resources such as vehicles or sensors change, the resulting SVA can be posed as a two-stage or a multi-stage stochastic optimization problem. In a two-stage stochastic program, the first-stage decisions are made with deterministic parameters before the realization of random variables representing uncertainty are revealed. Once the random events occur, the recourse decisions are made in the second-stage such that the missions decided for UVs in the first-stage have minimum conflict due to the new information from the random events. Thus, the first-stage decisions are to construct mission for each UV without revealing the randomness in their availability. Thereafter, based on the first-stage UV missions and realization of uncertainties, the second-stage decisions propose the recourse actions to adjust the profits attained in the first-stage.

\section{Notation and Model Formulation}\label{model}
In this section, we introduce the two-stage model for mission planning problems with the uncertainty in availability of UVs. POIs for a mission are defined in a set $P$ with their elements denoted as $\{p_1,\dots,p_n\}$, and similarly, let $R$ be the set of refueling or recharging stations with the elements $\{r_0, r_1,\dots, r_k\}$, and $r_0$ is the base station where all the UVs are initially stationed, and also assumed that UVs cannot recharge at $r_0$. The set $\bar{R}$ represents set $R$ without base station $r_0$. There are `$M$' UVs where each UV is indexed as $`m$', and all the `$M$' UVs are fueled to their capacity. The mathematical model for SVA is defined on a directed graph $G=(V,E)$ where $V$ is the set of POIs and recharging stations, $V=P\cup R$, and $E$ denotes the set of edges connecting a pair of nodes in $V$, and there are no self-loops in $G$. The set $P$ is indexed by $j$, and let $e_j^m$ represents an incentive for an UV `$m$' for visiting $j^{th}$ POI. The UVs are considered as heterogeneous as the incentive is UV dependent. For every edge $(i,j)$ in the set $E$, let $f_{ij}$ represents the fuel or time consumed by an UV to traverse it. Also, we assume that triangle inequalities are preserved, i.e., for every set of edges $i,j,k \in V$, $f_{ij} + f_{jk} \geq f_{ik}$. Another parameter is $F_m$ denoting the maximum distance an UV `$m$' can travel before it should be refueled. Similarly, let $\delta_m$ represents the maximum distance or time allowed for an UV during the mission. Finally, $q$ is a large constant used in the first-stage of the formulation which is usually set to the number of POIs, i.e., $q=|P|$. The objective of the mission planning problem is to determine the routes for each UV such that they never run out of fuel, and some of their availability are uncertain while maximizing the incentives collected by all UVs. A random variable $\tilde{\omega}$ is used to represent the uncertainty in the availability of UVs, and let $\omega$ represents a realization for $\tilde{\omega}$. The only  parameter used in the second-stage of the formulation is $\alpha_{m}(\om)$ which takes a value of $1$ or $0$ denoting the availability of the $m^{th}$ UV  for the scenario $\om$. A summary of notation is presented in Table \ref{tab:notations}. The two-stage stochastic programming model is presented in the next section.

\subsection{Risk-neutral Two-stage Stochastic Programming Recourse Model}\label{sec-model}
Considering the random variable $\tilde{\omega}$ to represent uncertainty for SVA, an abstract probability space is denoted by $(\Omega,\mathcal{F},\Xi)$, where $\Omega$ is the sample space, $\mathcal{F}$ is a $\sigma$-algebra on $\Omega$, and $\Xi$ is a probability measure on $\Omega$. We consider the case of a finite probability space, where $\Omega=\{\omega^1,\omega^2,\ldots,\omega^N\}$ with corresponding probabilities $\rho(\omega^1),\rho(\omega^2),\ldots,\rho(\omega^N)$ and $\sum_{\omega \in \Omega}\rho(\omega)=1$. In a risk-neutral formulation, $\mathbb{E}_{\tilde{\omega}}$ is an expectation operator and for a random variable function $\phi$, $\mathbb{E}_{\tilde\omega}(\phi) = \sum_{\omega \in \Omega}\rho(\omega)\phi_\omega$, where $\phi_\omega$ is a realization. An $\omega$ represents  random event for the availability of UVs with a probability $\rho(\omega)$. In a two-stage recourse model, a decision must be made here-and-now (first-stage) before future uncertainties are realized. The second-stage recourse problem for each realization `$\omega$' is solved for the given first-stage solution, and it's objective function value is weighted by its corresponding probability of occurrence $\rho(\omega)$. An optimal solution is attained when we reach the best objective value for the first- and second-stage objective functions as a whole.  

Next, we present a two-stage stochastic programming model based on standard routing problems. The first-stage decision variables used in the formulation are as follows: $y_{ij}^m$ is a binary variable taking a value `0' or `1' representing whether the edge is traversed by the UV $`m$' or not, respectively; $x_{ij}^m$ is a continuous variable representing the total distance traveled by an UV `$m$' when it reaches a POI `$j$' from a refueling or base station where $(i,j) \in E$; $z_r^m$ is a binary variable taking a value `0' or `1' based on whether the refueling station $r$ is used by an UV `$m$' or not, and this variable is used only for the refueling stations, i.e., $r \in R\setminus\{r_0\}$. The notation used to represent the set of in-degree and out-degree edges are as follows: for any set $V'\subset V$, $\beta^+(V') = \{(i, j) \in E: i \in V', j \notin V'\}$ where for any $E' \subseteq E$, $y(E') = \sum_{(i,j)\in E'} y_{ij}^m$.

\begin{table}
	\centering
	\begin{tabular}{ll}
		\toprule 
		\bf  Symbol &  \bf  Description \\ 
		\midrule 
		$P = \{p_1, \dots, p_n\}$ & set of $n$ POIs  \\
		$R = \{r_0, \dots, r_k\}$ & set of refueling or recharging stations, $r_0$ is the base station \\ 
		$F_m$ & fuel capacity of $m^{th}$ UV \\ 
		$G = (V, E)$ & directed graph with $V = P\cup R$  \\ 
		$e_{j}^m$ & incentive collected at POI $j$ by $m^{th}$ UV \\
		$\delta_m$ & maximum allowed distance for $m^{th}$ UV \\				
		$f_{ij}$ & fuel consumed for an edge $(i,j) \in E$ \\
		$\Omega$ & set of scenarios, $\omega \in \Omega$ is realization of random variable   \\
		$\rho(\omega)$ & probability of occurrence for $\omega$ \\ 
		$\alpha_{m}(\om)$ & availability of an UV $m$ for the scenario $\om$ \\
		\bottomrule
	\end{tabular}
	\caption{Notation}
	\label{tab:notations}
\end{table}

Using the above variables, the two-stage stochastic programming formulation `SVA-TS' is given as follows:

\begin{flalign}
& \text{Maximize} \quad \text{SVA-TS:} \sum_{(i,j)\in E, m \in M} e_{j}^m y_{ij}^m + \mathbb{E}_{\tilde{\omega}} \phi^{ec}(x,y,\omega) \notag& \\
&\text{Subject to:} \notag & \\
&\sum_{i\in V} y_{ji}^m = \sum_{i\in V} y_{ij}^m \quad \forall \, j\in V\setminus\{r_0\}, m \in M,\label{eq:1}& \\
&\sum_{i\in V, m \in M} y_{ir_0}^m = \left\vert{M}\right\vert \text{ and } \sum_{i\in V, m \in M} y_{r_0i}^m = \left\vert{M}\right\vert, \label{eq:2} &\\
&y(\beta^+(S))^m \geq z^m_r \quad \forall \, r\in S\cap R, S\subset V\setminus\{r_0\}:S\cap R \neq \emptyset, m \in M, \label{eq:3} &\\ 
& \sum_{i \in V}y_{ri}^{m} \leq q. z_{r}^{m} \hspace{1.65cm} \forall   r \in \bar{R},m \in M,  &&   \label{eq:4} \\ 
&\sum_{i\in V, m \in M} y_{ij}^m \leq 1 \text{ and } \sum_{i\in V, m \in M} y_{ji}^m \leq 1 \quad \forall \, j \in P, \label{eq:5} &\\
& \sum_{j\in V}x_{ij}^m - \sum_{j\in V}x_{ji}^m  = \sum_{j\in V}f_{ij}y_{ij}^m \quad \forall i \in P, m \in M, \label{eq:7} & \\
&x_{ri}^m = f_{ri}y_{di}^m \quad \forall \, i\in V, \, r \in R, m \in M, \label{eq:11} & \\
&x_{ij}^m \leq F_my_{ij}^m \quad \forall \, (i,j)\in E, \, m \in M, \label{eq:9} & \\
&\sum_{(i,j) \in E} f_{ij}y_{ij}^m \leq \delta_m \quad \forall \, m \in M,\label{eq:14a}& \\
& y_{ij}^m \in \{0,1\}, x_{ij}^m \geq 0 \quad \forall \, (i,j) \in E, m \in M, \label{eq:11b}  & \\
& z_r^m \in \{0,1\}\quad \forall \, r\in R\setminus \{r_0\}, m \in M. \label{eq:13} &  
\end{flalign}

In the above formulation, constraints \eqref{eq:1} ensure that cardinality of in-degree and out-degree matches for each UV in a refueling station. Constraints \eqref{eq:2} ensure that all the UVs are used for the mission, i.e, each UV leave and return to the base station. Constraints \eqref{eq:3} help a feasible solution to stay connected, and constraints \eqref{eq:4} are indicator type where it forces $z^m_r$ to take a value of $1$ if an $m^{th}$ UV uses the refueling station $r \in R\setminus\{r_0\}$. Constraints \eqref{eq:5} state that each POI can be visited only once. Constraints \eqref{eq:7} and \eqref{eq:11} eliminate sub-tours starting from refueling stations $r\in R\setminus{r_0}$, and also defines the distance for the continuous variables $x_{ij}^m$ for each edge $(i,j) \in E$ and UV $m$. Constraints \eqref{eq:9} ensure that the fuel consumed by any UV $m$ to reach a base station does not exceed its fuel capacity $F_m$. Constraints \eqref{eq:14a} define the maximum time or distance allowed for each UV during a mission. In  constraints \eqref{eq:14a}, $\delta_m$ is the maximum distance allowed for each UV $m$. Finally, constraints \eqref{eq:11b} and \eqref{eq:13} impose the restrictions on the decision variables. The second-stage recourse problem for a scenario $\om$ and given first-stage decisions $x$ and $y$ is given by:

\begin{flalign}
& \text{Maximize} \quad \quad \phi^{ec}(x,y,\omega) = \quad -\sum_{(i,j)\in E, m \in M} e_{j}^m v_{ij}^m(\om) \notag& \\
&\text{Subject to: } \notag & \\
& \sum_{j\in V}f_{ij}v_{ij}^m(\om) = \sum_{j\in V}x_{ij}^m - \sum_{j\in V}x_{ji}^m - \alpha_{m}(\om)\sum_{j\in V} f_{ij} y_{ij}^m \forall i \in P, m \in M, \label{eq:14} & \\
& v_{ij}^m(\om) \leq y_{ij}^m \quad \forall (i,j) \in E, m \in M, \label{eq:15a} & \\
&f_{ri}v_{ri}^m(\om) = x_{ri}^m - \alpha_{h}(\om).f_{ri}y_{ri}^m \quad \forall \, i\in V, \, r \in R, \forall m \in M, \label{eq:11a} & \\
& v_{ij}^m(\om) \in \{0,1\} \quad \forall \, (i,j) \in E, m \in M.\label{eq:16} & 
\end{flalign}

Variables $v_{ij}^m(\om)$ maintain the feasibility of the constraints \eqref{eq:7}-\eqref{eq:11} for the given first-stage values $x$ and $y$ using the constraint \eqref{eq:14}. Constraints \eqref{eq:15a} state the dependence of $y_{ij}^m$ and $v_{ij}^m(\om)$. Finally, binary restrictions for $v_{ij}^m(\om)$ are presented in \eqref{eq:16}. Let the relaxed recourse problem for $\phi^{ec}(x,y,\omega)$ be represented as $\phi^{ec}_r(x,y,\omega)$. For the relaxed problem  $\phi^{ec}_r(x,y,\omega)$, the constraints \eqref{eq:16} are replaced by $0 \leq v_{ij}^m(\om) \leq 1$.

\begin{thm}\label{31}
	The objective values of $\phi^{ec}(x,y,\omega)$ and $\phi^{ec}_r(x,y,\omega)$ are same.
\end{thm}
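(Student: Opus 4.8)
The plan is to show that, once the first-stage variables $(x,y)$ and a scenario $\om$ are fixed, \emph{every} feasible point of the relaxed recourse problem $\phi^{ec}_r(x,y,\om)$ is automatically integral. If so, the feasible set of the relaxation coincides exactly with that of $\phi^{ec}(x,y,\om)$, and the two optimal objective values are trivially equal (and either both attained or both empty). So the whole argument reduces to a feasibility/forcing statement rather than an optimization one.

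First I would exploit first-stage feasibility to simplify the right-hand sides of the recourse equalities. Substituting the flow identity \eqref{eq:7} into \eqref{eq:14} collapses its right-hand side, for each POI $i$ and UV $m$, to $(1-\alpha_m(\om))\sum_{j\in V} f_{ij} y_{ij}^m$; likewise substituting \eqref{eq:11} into \eqref{eq:11a} collapses the refueling-edge right-hand side to $(1-\alpha_m(\om)) f_{ri} y_{ri}^m$. The point of this step is that both right-hand sides acquire the common factor $(1-\alpha_m(\om))$, so the scenario enters only through the binary multiplier $\alpha_m(\om)\in\{0,1\}$.

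Next I would argue that these simplified equalities, together with the box constraints $0\le v_{ij}^m(\om)\le y_{ij}^m$ coming from \eqref{eq:15a} and the relaxed nonnegativity, pin $v$ down to the single value $v_{ij}^m(\om)=(1-\alpha_m(\om))\,y_{ij}^m$. For the refueling out-edges this is immediate from \eqref{eq:11a} once $f_{ri}>0$. For the POI out-edges I would split on the scenario: if $\alpha_m(\om)=1$ the right-hand side of \eqref{eq:14} is zero, so nonnegativity of $v$ and positivity of $f_{ij}$ force every $v_{ij}^m(\om)=0$; if $\alpha_m(\om)=0$ the equality reads $\sum_{j} f_{ij}\bigl(y_{ij}^m-v_{ij}^m(\om)\bigr)=0$ with each summand nonnegative (using $v\le y$ and $f_{ij}>0$), hence $v_{ij}^m(\om)=y_{ij}^m$ for every out-edge of $i$.

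Finally, since $y$ is binary and $\alpha_m(\om)\in\{0,1\}$, the forced value $(1-\alpha_m(\om))\,y_{ij}^m$ is itself binary, so the unique feasible $v$ of the relaxation already satisfies \eqref{eq:16}; the feasible regions of $\phi^{ec}_r$ and $\phi^{ec}$ therefore coincide and their optima agree. The delicate point, and the main obstacle, is the forcing step in the $\alpha_m(\om)=0$ case: one must verify that the single \emph{aggregated} equality per node (rather than an edge-by-edge constraint) still determines each individual $v_{ij}^m(\om)$, which relies entirely on the sign structure ($f_{ij}>0$ together with $v\le y$) turning the equality into a sum of nonnegative terms that must vanish. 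A secondary bookkeeping obstacle is checking that \eqref{eq:14} and \eqref{eq:11a} jointly cover every edge of $E$ — POI out-edges through the summation in \eqref{eq:14}, refueling-station out-edges through \eqref{eq:11a} — so that no component of $v$ escapes the forcing.
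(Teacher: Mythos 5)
Your proposal is correct and follows essentially the same route as the paper's proof: reduce the claim to showing that the relaxation forces $v_{ij}^m(\om)\in\{0,1\}$, split on $\alpha_m(\om)$, use the flow identity \eqref{eq:7} to collapse the right-hand side of \eqref{eq:14}, and invoke the dominance $v_{ij}^m(\om)\le y_{ij}^m$ to pin $v$ to $(1-\alpha_m(\om))y_{ij}^m$. If anything, your handling of the $\alpha_m(\om)=0$ case — writing the equality as a vanishing sum of nonnegative terms $f_{ij}(y_{ij}^m-v_{ij}^m(\om))$ — is tighter than the paper's cardinality argument via the sets $\Gamma$ and $\Gamma'$, and like the paper you implicitly need $f_{ij}>0$ on the relevant edges for the forcing to go through.
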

\begin{proof}
	We need to show that the values of $v_{ijh}(\om)$ will be either $0$ or $1$ for $\phi^{ec}_r(x,y,\omega)$. Due to constraints  \eqref{eq:15a}, it is sufficient to show that any $v_{ij}^m(\om) > 0$ will be equal to 1. If $\alpha_{m}(\om)=1$, then $v_{ij}^m(\om) =0$ due to constraint \eqref{eq:7}. When $\alpha_{m}(\om)=0$, $\sum_{j\in V} f_{ij} y_{ij}^m = \sum_{j\in V}f_{ij}v_{ij}^m(\om)$ due to constraint \eqref{eq:7}, and let $\Gamma$ be a set with $(i,j,m) \in \Gamma$ for any $y_{ij}^m = 1$, and let $\Gamma'$ be the corresponding set for $v_{ij}^m(\om)$. By the constraints \eqref{eq:15a}, $\left\vert{\Gamma}\right\vert = \left\vert{\Gamma'}\right\vert$, so to satisfy $\sum_{j\in V} f_{ij} y_{ij}^m = \sum_{j\in V}f_{ij}v_{ij}^m(\om)$,  $v_{ij}^m(\om) = 1$ for any $(i,j,m) \in \Gamma'$. 
\end{proof}

The significance of the above theorem is presented in the decomposition algorithm section. Some more propositions are used to tighten the relaxation for two-stage formulation. Following proposition is used to remove the binary restrictions for the variables $z_{r}^{m}$, and also provides a better LP relaxation compared to the constraint \eqref{eq:4}. 

\begin{proposition} \label{prop:Con_Replacement}
	Binary restrictions on constraints \eqref{eq:13} are relaxed. Constraints \eqref{eq:4} and \eqref{eq:13} are replaced by the following:
	\begin{alignat}{3}
	& {y_{ri}^{m}} \leq z_{r}^{m} \hspace{0.5cm} \forall i \in P \cup  \{r_{0}\}, \  r\in \bar{R},m \in M,  \nonumber &\\  
	& 0\leq z_{r}^{m}\leq 1 \hspace{0.5cm} \forall  r \in \bar{R}, m \in M.  \nonumber &
	\end{alignat}
\end{proposition}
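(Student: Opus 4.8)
The plan is to prove two separate things: that the replacement leaves the set of integer-feasible first-stage solutions (and hence the optimal value of SVA-TS) unchanged, and that the linear relaxation of the new system is contained in that of the old one, so the bound it yields is at least as tight. The key structural observation I would use throughout is that the objective of SVA-TS contains no $z$ terms, and the only other place $z_r^m$ appears is the connectivity constraint \eqref{eq:3}; the $z_r^m$ are therefore purely auxiliary, which lets me argue about them freely and, in particular, justify dropping their integrality.

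First I would establish validity in both directions. Take any integer-feasible point of the original formulation: since $y_{ri}^m\in\{0,1\}$ and $z_r^m\in\{0,1\}$, constraint \eqref{eq:4} forces $z_r^m=1$ as soon as a single $y_{ri}^m=1$ (then $qz_r^m\ge 1$), and leaves it free otherwise, so $y_{ri}^m\le z_r^m$ holds for every $i\in P\cup\{r_0\}$; thus the new inequalities cut off no original integer solution. Conversely, given a point of the new system with $y$ binary, set $\hat z_r^m=\max_i y_{ri}^m\in\{0,1\}$. The disaggregated inequalities give $z_r^m\ge\hat z_r^m$, so \eqref{eq:3} still holds with $\hat z$, and \eqref{eq:4} holds because either the station is unused ($\hat z_r^m=0$ and every $y_{ri}^m=0$) or it is used ($\hat z_r^m=1$ and the out-degree $\sum_i y_{ri}^m$ is bounded by the big-$M$ constant). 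Since no objective term and no remaining constraint pushes $z$ above $\hat z$, an optimal $z$ at any integral $y$ is already integral, so relaxing \eqref{eq:13} to $0\le z_r^m\le 1$ changes nothing for the integer program.

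Next I would prove the relaxation is tighter. Summing the disaggregated inequalities $y_{ri}^m\le z_r^m$ over $i\in P\cup\{r_0\}$ yields $\sum_i y_{ri}^m\le |P\cup\{r_0\}|\,z_r^m$, which reproduces \eqref{eq:4} once $q$ is taken equal to the number of destinations $|P\cup\{r_0\}|$; hence every point of the new relaxation satisfies the old one and the new feasible region is contained in the old. As the objective is a maximization independent of $z$, the new relaxation returns an upper bound no larger than the old. To see the containment can be strict, I would exhibit the standard fractional witness: a single $y_{ri_0}^m=1$ with $z_r^m=1/q$ satisfies \eqref{eq:4} but violates $y_{ri_0}^m\le z_r^m$, so it is removed by the new system.

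The step needing the most care is the bookkeeping in the summation, namely reconciling the number of disaggregated inequalities with the constant $q$: the index set $P\cup\{r_0\}$ has $|P|+1$ elements, whereas the text nominally sets $q=|P|$. I would handle this by taking $q$ equal to the true maximal out-degree $|P\cup\{r_0\}|=|P|+1$ (a valid choice, since the text only specifies that $q$ is a large constant ``usually'' set to $|P|$); with this choice the summed inequality exactly dominates \eqref{eq:4} and the integer-feasible sets coincide in both directions. The conceptual heart, by contrast, is the routine but essential fact that disaggregating a big-$M$ indicator constraint simultaneously forces $z$ to be integral at any integral $y$ and deletes the fractional points the aggregated form admits.
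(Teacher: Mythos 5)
The paper offers no argument of its own here: its ``proof'' is a one-line citation to an external reference (\cite{sundar2016exact}). Your write-up is therefore a genuine, self-contained substitute, and it is the standard disaggregation argument, correctly organized: (i) the new inequalities are valid for integer points of the original system because \eqref{eq:4} with binary $z_r^m$ already forces $z_r^m=1$ whenever some $y_{ri}^m=1$; (ii) at any integral $y$ one may take $\hat z_r^m=\max_i y_{ri}^m$, which is integral, still satisfies \eqref{eq:3} because $z_r^m$ sits on the right of a $\geq$ there (so decreasing it only helps), and nothing else constrains $z$ --- hence integrality of $z$ is free; (iii) summing the disaggregated inequalities dominates the aggregated big-$M$ form, and your fractional witness $y_{ri_0}^m=1$, $z_r^m=1/q$ shows the domination can be strict. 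You also correctly flag and repair the bookkeeping issue with $q=|P|$ versus the $|P|+1$ inequalities being summed.

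There is one concrete gap you do not address: the left-hand side of \eqref{eq:4} sums $y_{ri}^m$ over all $i\in V=P\cup R$, whereas the disaggregated family in the proposition is indexed only over $i\in P\cup\{r_0\}$, omitting edges from $r$ to other refueling stations in $\bar R$. If $y_{rr'}^m=1$ for some $r'\in\bar R$ while $y_{ri}^m=0$ for all $i\in P\cup\{r_0\}$, then the new system admits $z_r^m=0$, which violates \eqref{eq:4}; so neither your claimed coincidence of integer-feasible sets nor the containment of the new relaxation in the old one holds verbatim. To close this you must either extend the disaggregated family to all $i\in V$ with $(r,i)\in E$, or argue separately that such refueling-station-only excursions can be excluded without loss (they collect no incentive and only consume fuel and mission time, so an optimal solution never needs them, and setting $z_r^m=1$ for such an $r$ only strengthens the connectivity requirement \eqref{eq:3} that the tour already satisfies). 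With that one repair --- which is arguably a defect of the proposition's statement as much as of your proof --- the argument is complete.
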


\begin{proof}
	See \cite{sundar2016exact} for the proof.
\end{proof}

\subsection{Tightening the Two-Stage Stochastic Formulation} \label{subsec:strengthen}
A constraint is strengthened if it eliminates fractional solutions to the two-stage stochastic formulation SVA-TS without removing any feasible integer solutions. The following proposition strengthens the inequalities \eqref{eq:9}.

\begin{prop} \label{prop:strengthen} Whenever triangle inequalities are maintained, i.e, for any $i,j,k \in V$, if $f_{ij} + f_{jk} \geqslant  f_{ki}$, then the constraints \eqref{eq:9} are strengthened using the following constraints:
	\begin{subequations}
		\begin{flalign}
		& x_{ij}^m \leqslant (F_m - t_j)y_{ij}^m \quad \forall j\in P,\, (i,j) \in E, m \in M, \nonumber & \\
		& x_{ir}^m \leqslant F_my_{ir}^m \quad \forall i \in V, r \in R, m \in M, \nonumber & \\
		& x_{ij}^m \geqslant (s_i + f_{ij}) y_{ij}^m \quad \forall i\in P, \, (i,j) \in E,  m \in M, \nonumber  &
		\end{flalign}
		\label{eq:strengthen}
	\end{subequations}
	where $t_i = \min_{r\in R} f_{ir}$ and $s_i = \min_{r\in R} f_{ri}$. 
\end{prop}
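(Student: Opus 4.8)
The plan is to treat ``strengthening'' as two separate obligations: first, \emph{validity}, i.e.\ that every integer-feasible solution of SVA-TS satisfies all three inequalities, so that no feasible integer point is removed; and second, \emph{tightness}, i.e.\ that each inequality is at least as strong as \eqref{eq:9} and is strictly stronger on a nonempty set of fractional points that \eqref{eq:9} admits. The whole argument rests on reading the continuous variables through the accumulation identities: constraint \eqref{eq:11} initializes $x_{ri}^m=f_{ri}y_{ri}^m$ on every edge leaving a refueling station, and constraint \eqref{eq:7} propagates this cumulative value across each POI $i$, adding the length of the outgoing edge, so that along any integer route $x_{ij}^m$ equals the distance accumulated since the last visit to a station in $R$. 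I would fix an arbitrary integer-feasible solution, isolate the UV-$m$ sub-route between two consecutive stations $r \to v_1 \to \cdots \to v_p \to r'$ (with $v_1,\dots,v_p\in P$), and carry out all three verifications on this segment, the cases $y_{ij}^m=0$ being immediate from nonnegativity of $x$.

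For the upper bound $x_{ij}^m \le (F_m - t_j)y_{ij}^m$ with $j\in P$: on the segment above, \eqref{eq:11} and \eqref{eq:7} give that the cumulative value on the edge entering $r'$ is the full segment length, which \eqref{eq:9} bounds by $F_m$. Writing this length as $x_{ij}^m$ plus the distance of the remaining portion $j\to\cdots\to r'$, and collapsing the intermediate POIs by repeated use of the triangle inequality, the remaining portion has length at least $f_{jr'}\ge \min_{r\in R} f_{jr}=t_j$. Hence $x_{ij}^m \le F_m - t_j$, as claimed. The middle inequality $x_{ir}^m \le F_m y_{ir}^m$ for $r\in R$ is exactly \eqref{eq:9} restricted to edges whose head is a station, so it is valid by hypothesis; since every edge of $G$ has its head either in $P$ or in $R$, the first two inequalities together subsume \eqref{eq:9}.

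For the lower bound $x_{ij}^m \ge (s_i+f_{ij})y_{ij}^m$ with $i\in P$: by \eqref{eq:7} the value on the edge entering $i$ is $x_{ij}^m - f_{ij}$, and this equals the distance travelled from the last station to $i$ along the segment, which by the triangle inequality is at least $f_{ri}\ge \min_{r\in R} f_{ri}=s_i$. Rearranging gives the bound. With validity in hand, I would close the proof by exhibiting tightness relative to \eqref{eq:9}: since $t_j\ge 0$ and $s_i+f_{ij}>0$, the new upper bound satisfies $(F_m-t_j)y_{ij}^m \le F_m y_{ij}^m$ (strictly whenever $t_j>0$ and $y_{ij}^m>0$), and the lower bound cuts off every fractional point with $0\le x_{ij}^m < (s_i+f_{ij})y_{ij}^m$, none of which \eqref{eq:9} excludes.

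I expect the main obstacle to be the bookkeeping that justifies the cumulative-distance reading of $x_{ij}^m$ cleanly enough to apply the triangle inequality, i.e.\ arguing that on the sub-route the values are monotone and that the maximum occurs on the edge entering the next station, so that \eqref{eq:9} really caps the whole segment. A secondary point to handle carefully is the status of the base $r_0\in R$: since $t_j$ and $s_i$ minimize over all of $R$ including $r_0$, the ``return to a station'' that closes each segment may be the base itself, and I would confirm this does not weaken any of the three estimates.
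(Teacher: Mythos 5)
Your argument is correct, but note that the paper itself supplies no proof of Proposition~\ref{prop:strengthen}: it simply defers to the cited reference \cite{venkatachalam2018two} (where this family of bounds, originating in the fuel-constrained routing literature, is established). So there is no in-paper argument to compare against; what you have written is a self-contained reconstruction of the standard one. Your two-part reading --- validity via the interpretation of $x_{ij}^m$ as the fuel accumulated since the last station visit (initialized by \eqref{eq:11}, propagated by \eqref{eq:7}), plus dominance over \eqref{eq:9} --- is exactly the right skeleton: applying \eqref{eq:9} to the final edge of a station-to-station segment caps the whole segment at $F_m$, collapsing the suffix $j\to\cdots\to r'$ by the triangle inequality gives the residual $t_j$, and collapsing the prefix $r\to\cdots\to i$ gives the $s_i+f_{ij}$ lower bound. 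Your handling of the two side issues is also right: the middle inequality is just \eqref{eq:9} on edges into $R$, so the three constraints jointly dominate \eqref{eq:9}, and including $r_0$ in the minima defining $t_j$ and $s_j$ is harmless since every segment terminates at some element of $R$ (possibly the base). The only point worth making explicit if you were to write this out in full is the structural claim you currently leave implicit: that constraints \eqref{eq:1}, \eqref{eq:2}, \eqref{eq:3}, \eqref{eq:5} force the support of an integer-feasible $y^m$ to decompose into station-delimited segments on which the cumulative values are well defined and monotone, which is what licenses the segment-by-segment verification.
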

\begin{proof} Refer \cite{venkatachalam2018two}.
\end{proof}

\section{Algorithm}\label{alog}

The deterministic equivalent problem of a two-stage stochastic programming formulation is a large scale mixed-integer linear programming model. The branch-and-cut method in any commercial solver can be used, however it will be computationally challenging. Hence, we need an efficient decomposition algorithm to solve the instances of the stochastic integer problem described in the previous section. Another challenge in the two-stage SVA-TS formulation is the sub-tour elimination constraint \eqref{eq:3} in the first-stage problem. The total number of such constraints will be exponential, hence it is not practical to explicitly add the constraints to the two-stage model. Hence, these constraints have to be dynamically generated based on their need and fed to the solver whenever required. Hence, we present a decomposition algorithm and a methodology used to dynamically add sub-tour elimination constraints in this section.

\subsection{Decomposition Algorithm}\label{decomp}

The decomposition algorithm used for `SVA-TS' is a variant of L-shaped algorithm \cite{van1969shaped} where the first-stage solution is obtained by solving the first-stage problem with the deterministic parameters. Subsequently, the second-stage problems are solved based on the first-stage solutions and realizations of the random variable. Whenever, the first- and second-stage solutions are not optimal, optimality cuts are generated  based on the dual information of the second-stage problems and added to the first-stage problem. The optimality cuts approximate the value function of the second-stage profit in the first-stage problem. Thus, the dual information of the second-stage recourse problems are used to generate the optimality cuts for the first-stage, and the optimality cuts approximate the second-stage objective function. The process is iterated till we attain optimal solutions for the first- and second-stages. Since, the dual information for the second-stage problems are required, the second-stage problems are required to be linear programs, hence the importance of theorem \eqref{31}. Thus, the theorem helps to use a variant of  L-shaped method to solve the instances of SVA-TS. Otherwise, the binary restrictions for some of second-stage variables will make the value function as non-convex and lower semi-continuous, and a direct use of L-shaped method is not possible. The first-stage problem  is a mixed-integer program due to the binary restrictions for $y$ and $z$, and using theorem \eqref{31}, the second-stage problems are solved as linear programs. The information is iteratively passed between the stages till an optimal solution is attained for SVA-TS. The optimality cuts in the first-stage can be one single cut, where the dual information from all the second-stages are aggregated or a multi-cut, where dual information from each scenario will be represented as a cut in the first-stage. A multi-cut approach presents more detail than single-cut, however it add additional stress to the first-stage problem due to the volume of individual constraints added to the first-stage model. Since, there are binary variables and sub-tour elimination constraints \eqref{eq:3} in the first-stage problem, we adopted a single-cut approach for the computational experiments.

\subsubsection{Problem Reformulation}\label{decomp-model}	
The formulation SVA-TS given in \eqref{eq:1}-\eqref{eq:16} is divided into two problems, a master and the second-stage problem. Constraints \eqref{eq:1}-\eqref{eq:13} are reformulated as master problem along with an unrestricted variable $\theta$ to approximate the objective function value of the second-stage problem. The unrestricted variable is bounded using the optimality cuts generated using the dual information of the second-stage problems. The master problem `SVA-TS-MP' is given as follows:

\begin{subequations}
	\begin{flalign}
	& z^ k = \text{Maximize} \sum\limits_{\substack{(i,j) \in E: j \in P \\ m \in M}} e_{j}^m y_{ij}^m + \theta \label{eq:mp} & \\		
	& \text{ Subject to:} \nonumber & \\
	&  \eqref{eq:1} - \eqref{eq:13}, \nonumber  &\\  
	& \sum \limits_{\substack{\omega \in \Omega}} \sum\limits_{\substack{(i,j) \in E, \\ m \in M}} ((\pi_1(\omega)^{t \top}T_1)+(\pi_2(\omega)^{t \top}T_2) + (\pi_3(\omega)^{t \top}T_3)) y_{ij}^m + ((\pi_1(\omega)^{t \top}S_1) +(\pi_3(\omega)^{t \top}S_3)) x_{ij}^m + \theta \leq 0 \nonumber &\\ 
	& \qquad \qquad \qquad \qquad \qquad \qquad \qquad \qquad \qquad \qquad \qquad \qquad \qquad \qquad  \qquad \qquad  \qquad \qquad \qquad \ t \in \Pi, \label{eq-master-1a} &\\
	&\,\,\,\, \theta \in \mathbb{R}. \label{eq:bin_12} &
	\end{flalign}
	\label{eq:1MP}
\end{subequations}

In the master problem \eqref{eq:1MP}, $\pi_1(\omega)$, $\pi_2(\omega)$, and $\pi_3(\omega)$ are the dual vectors of the constraints \eqref{eq:14}, \eqref{eq:15a}, and \eqref{eq:11a}, respectively, for a scenario $\omega$. Similarly, $T_1$, $T_2$, and $T_3$ represent the co-efficient matrices for the variables $y_{ij}^m$ in the constraints \eqref{eq:14}, \eqref{eq:15a}, and \eqref{eq:11a}, respectively. Also, $S_1$ and $S_3$ represent the co-efficient matrices for the variables $x_{ij}^m$ in the constraints \eqref{eq:14} and \eqref{eq:11a}, respectively. Finally, $\theta$ is an unrestricted decision variable. Constraints \eqref{eq-master-1a} are the \textit{optimality} cuts, which are computed based on the optimal dual solution of second-stage problem $\phi^{ec}_r(y,x,\omega)$. Optimality cuts approximate the value function of the second-stage problems $\phi^{ec}_r(y,x,\omega)$. The details of the algorithm are summarized in Fig. \ref{figDA}. It should be noted that the two-stage model has relatively complete recourse property, i.e, $\phi^{ec}_r(y,x,\omega) < \infty$ for any $y_{ij}^m$ and $x_{ij}^m$. Hence, feasibility cuts are not required for the master problem.

\begin{figure}[!htb]
	\begin{hanglist}
		\item \textbf{Decomposition Algorithm}
		\item \hrulefill
		
		{\it\bf \item Step 0. } \textit{Initialize:} $n \leftarrow 0$, $lb \leftarrow -\infty$, $ub\leftarrow \infty$, and $\epsilon > 0$ is a user defined parameter and  $x^0, y^0, z^0$ are obtained as follows: argmin$ \{\sum_{(i,j)\in E, m \in M} e_{j}^m y_{ij}^m|\eqref{eq:1}-\eqref{eq:13}\}.$
		
		{\it\bf \item Step 1.} \textit{Solve second-stage problems:} Solve $\phi^{ec}_r(y,x,\omega)$ for each $\omega \in \Omega$, and obtain dual values $\pi_1(\omega)$, $\pi_2(\omega)$ and $\pi_3(\omega)$ for each second-stage problem. 
		
		{\it\bf  \item Step 2.} \textit{Optimality cut:} 
		For any integer solution for `SVA-TS-MP': Based on the dual values from the second-stage problem, generate an optimality cut (\ref{eq:1MP}b), and if it is violated then add it to the set $\Pi$ of the master problem `SVA-TS-MP.'
		
		{\it\bf  \item Step 3.} \textit{Solve master problem:} Solve the master problem SVA-TS-MP along with the new optimality cut, and let the objective function value be $u^n$. Set $ub \leftarrow min\{u^n, ub\}$.  Check for strongly connected components, and if a constraint \eqref{eq:3} is violated then Step 4 otherwise Step 5.					
		
		{\it\bf  \item Step 4.} \textit{Add sub-tour elimination constraints:} Add the corresponding infeasible constraint \eqref{eq:3}. Go to Step 3.					
		
		{\it\bf  \item Step 5.} \textit{Update bounds:} $v^n \leftarrow \{\sum_{(i,j)\in E, m \in M} e_{j}^m y_{ij}^m|\eqref{eq:1}-\eqref{eq:13}\} + \mathbb{E}_{\tilde{\omega}} \phi^{ec}_r(y,x,\omega)$, and set $lb \leftarrow max\{v^n, lb\}$. If $lb$ is updated, set incumbent solution to $y^{*} \leftarrow y^n, z^{*}\leftarrow z^n$ and $x^{*}\leftarrow x^n$.

		{\it\bf  \item Step 6.} \textit{Termination:} If $ub-lb < \epsilon|ub|$ then stop, $y^{*}, z^{*}$ and $x^{*}$ are $\epsilon$-optimal solutions, else set $n \leftarrow n+1$ and return to Step 1.
		
		\item \hrulefill
	\end{hanglist}
	\caption{Details of the branch and cut decomposition algorithm}
	\label{figDA}
\end{figure}

\subsection{Dynamic Sub-Tour Elimination Constraints}\label{det-model}

The sub-tour elimination constraints \eqref{eq:3} are exponential in number, hence it is computationally not efficient to add them explicitly. Hence, the constraints are relaxed from the master problem in the decomposition algorithm given in Fig. \ref{figDA}. During the branch-and-cut procedure used to solve the master problem, every feasible solution is checked whether any of the constraints \eqref{eq:3} are violated. If so, we add the corresponding infeasible constraints to the master problem.

The details of the algorithm used to find an infeasible constraint for a given integer feasible solution for the master problem \eqref{eq:3} are as follows.  A violated constraint \eqref{eq:3} can be described by a subset of vertices $S \subset V\setminus\{r_0\}$ such that $S\cap R \neq \emptyset$ and $y(S) = |S|$ for every $r \in S\cap R$. We find the strongly connected components (SCC) of $S$. Every SCC that does not contain the refueling station is a subset $S$ of $V\setminus\{r_0\}$ which violates the constraint \eqref{eq:3}. We add a sub-tour elimination constraint for each SCC and continue solving the original problem. Many off-the-shelf commercial solvers provide a feature called ``solver callbacks'' to implement such an algorithm into its branch-and-cut framework.

\section{Computational Experiments}\label{comp}

\subsection{Algorithm Performance}\label{alg}
The  decomposition algorithm presented in Fig. \ref{figDA} was implemented in Java, and the dynamic sub-tour elimination constraints are implemented using solver callback functionality of CPLEX version 12.9 \cite{CPLEX12}. All the computational runs were performed on a Dell Precision T5500 workstation (Intel Xeon E5630 processor @2.53 GHz, 12 GB RAM). A time limit of one hour was used for each of the instances, and the computational runtimes are reported in seconds. Similar to the computational experiments in \cite{venkatachalam2018two} and \cite{sundar2017analysis}, the performance of the algorithm was tested with randomly generated test instances. A square grid of [100, 100] was used to generate the random instances. There were four refueling stations, and the locations for all the refueling stations were fixed. In steps of ten, the number of POIs were varied from 10 to 60, and the locations of the POIs were randomly generated within the square grid. For each $|P| \in \{10,20,30,40,50,60\}$, we generated five random instances. Two UVs were used for the computational experiments, and the fuel capacity of the UVs was $F_m$. The parameter $F_m$ was varied linearly with a parameter $\lambda$. The parameter $\lambda$ represents the maximum distance between any pair of nodes within the instance. The fuel capacity $F_m$ is chosen based on four different combinations for $\lambda$ given as $\{2.25 \lambda, 2.5 \lambda, 2.75 \lambda, 3\lambda \}$. The parameter $f$ representing the distance traveled or time consumed between a pair of nodes is the Euclidean distance between the pair. There are about 240 random instances for the computational experiments.

\indent Using the generated instances, three different computational runs were performed. The first experiment was the implementation of the algorithm given in Fig. \ref{figDA}, and the second set of experiments were conducted using the deterministic equivalent formulation (DEP). The DEP formulation is the entire model with constraints \eqref{eq:1}-\eqref{eq:16}. The third set of experiments were conducted using the linear relation for the second-stage, i.e., $\phi^{ec}(y,x,\omega)$ is replaced by $\phi^{ec}_r(y,x,\omega)$.  There are two UVs, and we constructed four different types of scenarios by changing the availability of the second UV as 100\%, 75\%, 25\% and 0\%. The profits for UV1 and UV2 are derived using the uniform distributions $\mathcal{U}(0, 150)$ and $\mathcal{U}(0, 170)$, respectively. Additionally, to reflect heterogeneity among the UVs, UV1's incentive for 50\% of the POIs is kept at zero, i.e, UV2 is better equipped and has higher incentives compared to UV1, however its availability is uncertain. 

\indent Fig. \ref{rFig} presents box-plot for the runtime performance of two-stage formulation, reformulation and decomposition algorithm denoted as a, b, and c, respectively. The MIP Gap represents the difference between the upper and lower bounds of the objective function, and a stipulated run time of one hour was allowed for each instance. A lower MIP gap represents a better quality solution, and is closer to optimality. The runs using reformulation and decomposition algorithm performed well compared to the two-stage model, however decomposition algorithm had the  lowest dispersion for runtimes. Some of the smaller instances were quicker using DEP formulation, however they had the largest dispersion compared to reformulation or decomposition algorithm. Especially, for the instances with 60 POIs, reformation and decomposition algorithms performed better than DEP. The performance of reformulation and decomposition were closer for 60 instances, however for some of the runs, decomposition algorithm was better than reformulation. Table \ref{rtime} and \ref{runtime} present the run time characteristics where `BD' and `SEC' represent number of optimality and sub-tour elimination cuts, respectively. The model was able to solve upto 30 POIs optimally, and about 90\% of instances with 40 POIs. However, for instances with 50 and 60 POIs, the model was able to solve about 30\% of the instances to optimality. As denoted in Fig. \ref{rFig}, the reformulation and decomposition algorithm had distinct advantage over the DEP formulation when the number of POIs exceeded 30. \\

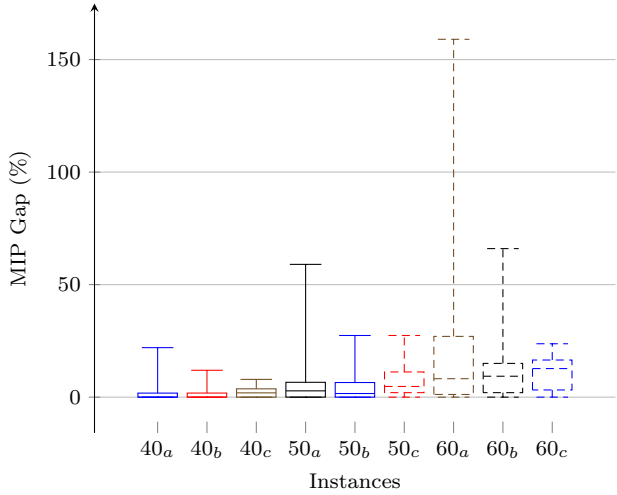
\begin{figure}	
	\begin{tikzpicture}
	\begin{axis}[
	boxplot/draw direction=y,
	x axis line style={opacity=0},
	axis x line*=bottom,
	axis y line=left,
	enlarge y limits,
	ymajorgrids,
	xtick={1,2,3,4,5,6,7,8,9},
	xticklabels={$40_a$,$40_b$,$40_c$,$50_a$,$50_b$,$50_c$,$60_a$,$60_b$,$60_c$},
	xlabel={Instances},
	ylabel={MIP Gap (\%)},
	]
	\addplot+ [
	boxplot prepared={
		lower whisker=0, lower quartile=0,
		median=0,
		upper quartile=1.8, upper whisker=22,
	},
	] coordinates {};
	\addplot+ [
	boxplot prepared={
		lower whisker=0, lower quartile=0,
		median=0,
		upper quartile=1.82, upper whisker=11.95,
	},
	] coordinates {};
	\addplot+[
	boxplot prepared={
		lower whisker=0, lower quartile=0,
		median=1.9,
		upper quartile=3.7, upper whisker=7.9,
	},
	] coordinates {};
	\addplot+[
	boxplot prepared={
		lower whisker=0, lower quartile=0,
		median=2.8,
		upper quartile=6.6, upper whisker=59,
	},
	] coordinates {};
	\addplot+[
	boxplot prepared={
		lower whisker=0, lower quartile=0,
		median=1.62,
		upper quartile=6.5, upper whisker=27.4,
	},
	] coordinates {};
	\addplot+[
	boxplot prepared={
		lower whisker=0, lower quartile=2,
		median=4.75,
		upper quartile=11.2, upper whisker=27.4,
	},
	] coordinates {};
	\addplot+[
	boxplot prepared={
		lower whisker=0, lower quartile=1.2,
		median=8.2,
		upper quartile=27, upper whisker=159,
	},
	] coordinates {};
	\addplot+[
	boxplot prepared={
		lower whisker=0, lower quartile=2,
		median=9.3,
		upper quartile=15, upper whisker=66,
	},
	] coordinates {};
	\addplot+[
	boxplot prepared={
		lower whisker=0, lower quartile=3.2,
		median=12.7,
		upper quartile=16.5, upper whisker=23.73,
	},
	] coordinates {};
	\end{axis}
	\end{tikzpicture}
	\caption{Box-plots to denote the algorithm performance for formulation (a), reformulation (b), and decomposition algorithm (c)}
	\label{rFig}
\end{figure}

\begin{table}[H]
	\centering	
	\begin{tabular}{|c|c|c|c|}
		\toprule 
		\bf  \# POIs & \bf \# BD &	\bf\# SEC & \bf Avg. MIP Gap(\%) \\ 
		\midrule 		
		10 & 4	& 84 &	0 \\
		20 & 11 & 740 &	0 \\
		30 & 30 & 5,536 &	0 \\
		40 & 35 & 15,935 &	2.35 \\
		50 & 42 & 18,560 &	7.80 \\
		60 & 47 & 23,000 &	11.15 \\
		\hline 		
	\end{tabular}
	\caption{Number of optimality and sub-tour elimination cuts, and average MIP gap}
	\label{rtime}
\end{table}

\begin{table}[H]
	\centering	
	\begin{tabular}{|c|c|c|c|c|c|}
		\toprule 
		\bf  \# POIs & Min &	Max & Avg & Med & Std \\ 
		\midrule 		
		10 & $<$1 & 7 & $<$1 & $<$1 & $<$1 \\
		20 & $<$1 & 240 & 27 & 3 & 54 \\
		30 & $<$1 & 3,600 & 1,136 & 175 & 1,494 \\
		40 & 30 & 3,600 & 2,640 & 3,600 & 1,518 \\
		50 & 380 & 3,600 & 3,439 & 3,600 & 720 \\
		60 & 3,600 & 3,600 & 3,600 & 3,600 & 0 \\
		\hline 		
	\end{tabular}
	\caption{Runtime (seconds) for decomposition algorithm}
	\label{runtime}
\end{table}

\begin{figure}
	\centering
	\begin{subfigure}{.5\textwidth}
		\centering
		\includegraphics[width=.77\linewidth]{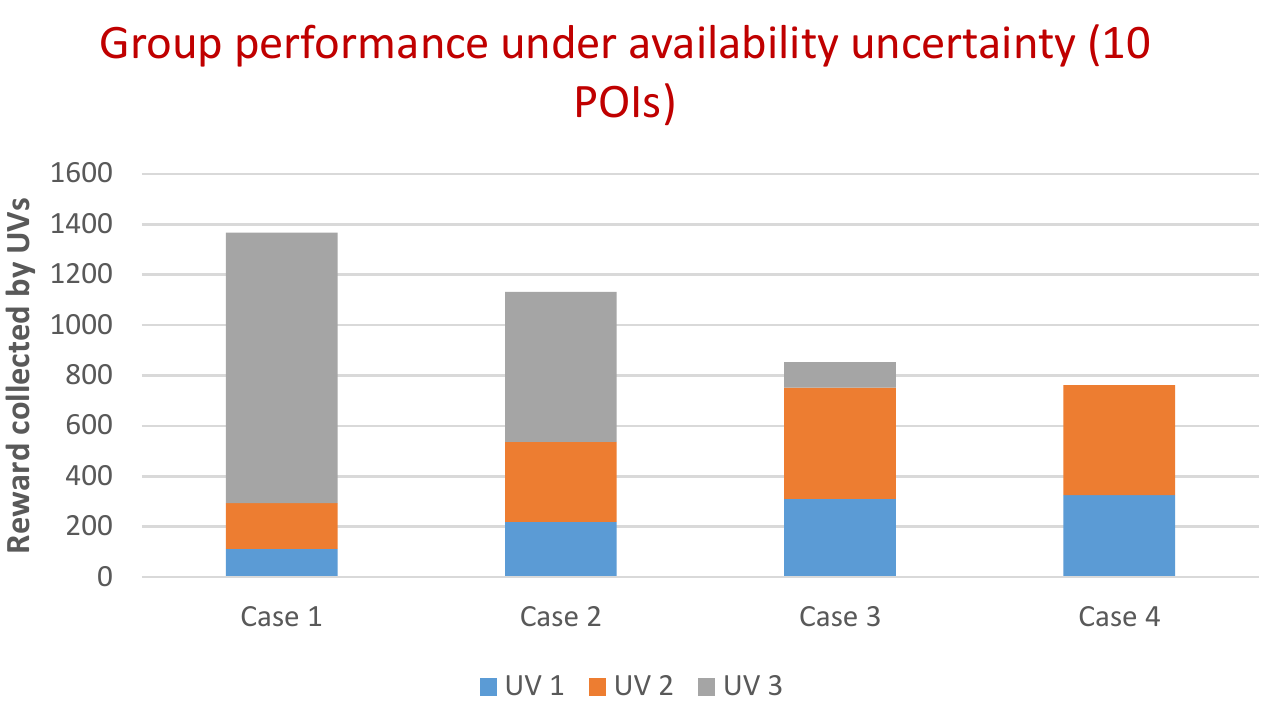}
		\caption{10 POIs - Three UVs, and UV 3 with limited availability}
		\label{fig:sub1}
	\end{subfigure}%
	\begin{subfigure}{.5\textwidth}
		\centering
		\includegraphics[width=.77\linewidth]{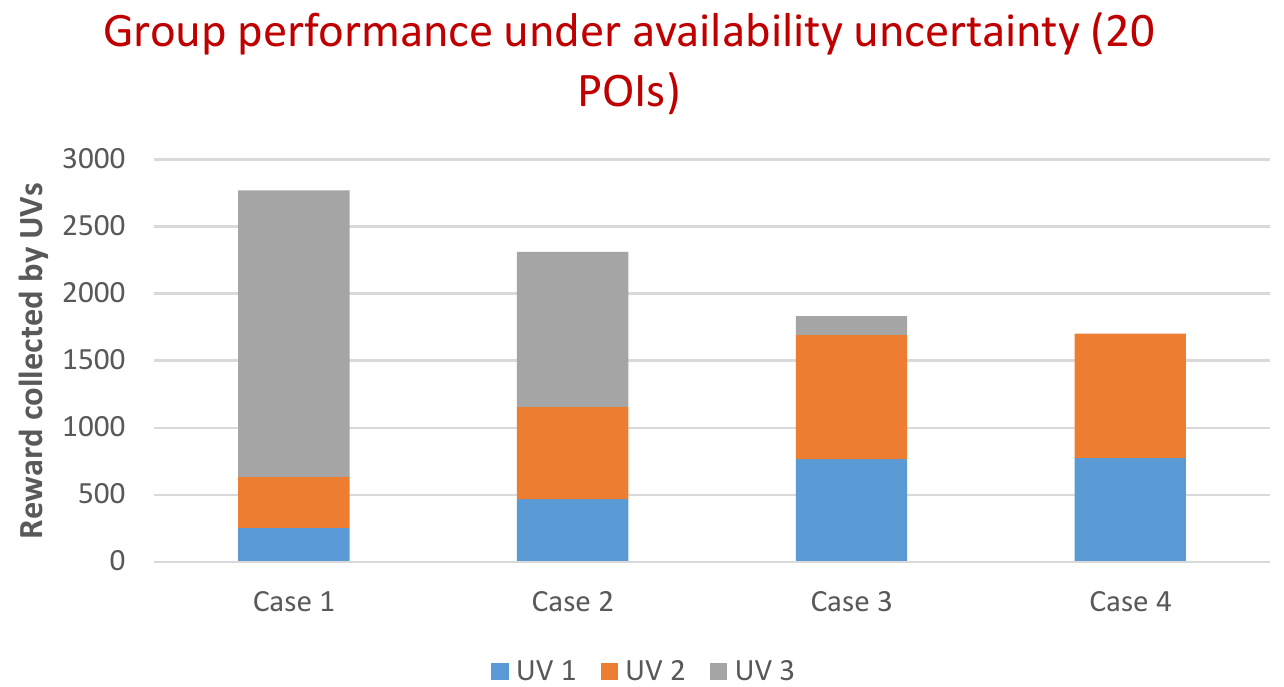}
		\caption{20 POIs - Three UVs, and UV 3 with limited availability}
		\label{fig:sub2}
	\end{subfigure}
	\caption{Distribution of rewards among UVs where 	Cases 1, 2, 3, and 4 represent the availability of UV3 as 100\%, 75\%, 25\%, and 0\%, respectively}
	\label{fig:qual}
\end{figure}

\indent Fig. \ref{fig:qual} quantifies the use of the proposed two-stage stochastic programming approach compared to a deterministic model. For this experiment setup, there are three UVs, and Figs. \ref{fig:sub1} and \ref{fig:sub2} use 10 and 20 POIs, respectively. Four different cases were evaluated with the availability of UV3 varied at 100\%, 75\%, 25\% and 0\%. Especially, UV1 and UV2 were able to reallocate the POIs during the absence of UV3 in the cases 2, 3, and 4, so the overall rewards collected is maximized. This is an evaluation of `value of stochastic solution' \cite{birge1982value} for the two-stage stochastic programming model, and the benefits are in between 10\%-30\% by using the proposed two-stage stochastic model compared to the deterministic model.

\subsection{ROS Simulation}\label{sim}

\indent In a data-driven simulation study, we quantified the impact of the two-stage stochastic model in rviz \cite{hershberger6rviz} 3D-simulation environment for ROS \cite{ros}. The overall architecture of the implementation is shown in Fig. \ref{rosov}. Turtlebots \cite{tbot} were used within the turtlebot\_stage package environment as shown in Fig. \ref{ros}. The blue ovals are recharging stations and the red rectangles are the POIs. Each Turtlebot visits the POIs in a sequence determined by the two-stage stochastic or deterministic model.  The POIs are randomly selected within the turtlebot\_stage package environment, and the recharging stations are selected a priori in five different locations as shown in Fig. \ref{ros}. The ROS navigation stack \cite{rosnav} is used to calculate the time between each pair of recharging stations and POIs for the two-stage stochastic programming model $(i,j) \in E$ denoted as $f'_{ij}$. However, they cannot be directly used as $f_{ij}$ in the two-stage stochastic model since many pairs of $f'_{ij}$ do not concur with the triangle inequality property. Hence, a conversion for $f'_{ij}$ to $f_{ij}$ is performed using Dijkstra's algorithm \cite{dijkstra1959note}. Any shortest path planning between a pair of nodes will provide the appropriate mapping between $f'_{ij}$ to $f_{ij}$, so the triangle inequality property can be preserved. We obtained two sets of missions from deterministic and two-stage stochastic programming models, and the realization of profits are measured quantitatively. Based on the scenarios, the probability of failure is chosen and the decision is made whether to proceed to the next POI or stop due to failure. For example, with the scenario of 25\% failure for an UV,  a random number is chosen between 0 and 1, and when the number is less than 0.25, the UV is declared as a failure, and the mission for that UV is terminated with the incentives collected so far.

\begin{figure}[h!]
	\centering
	\includegraphics[scale=0.40]{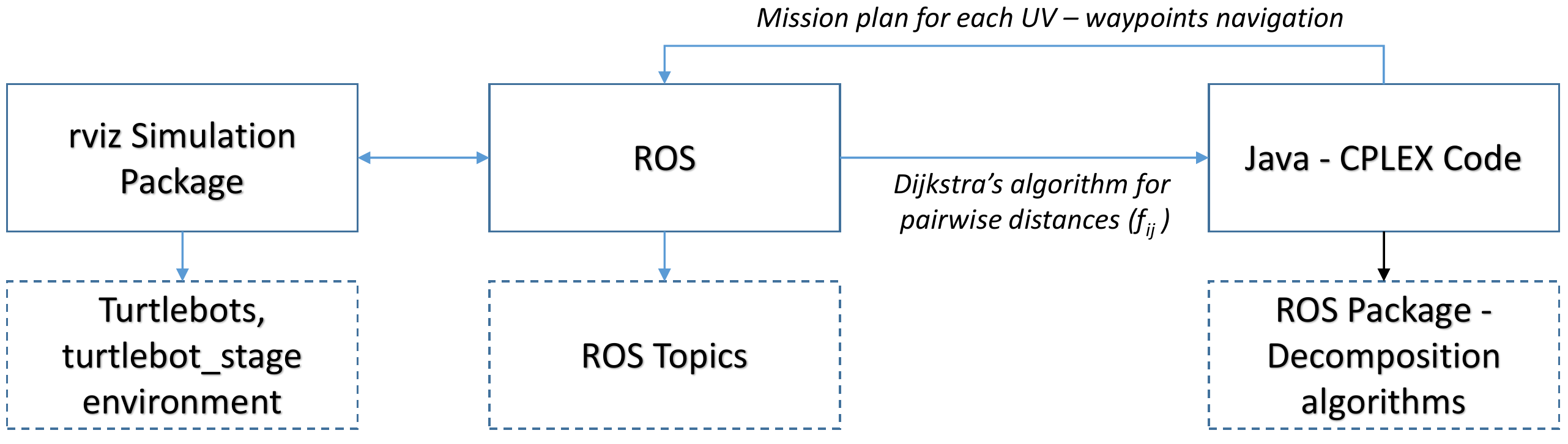}
	\caption{Software framework consisting of interaction among ROS, rviz, Turtlebots, and decomposition algorithms}
	\label{rosov}
\end{figure}

\indent The experiments were conducted using ROS Kinetic Kame \cite{kame} using a Ubuntu 16.04 (Xenial) release and the ROS topic and publisher were developed using Python programming language. Similar to the previous section, two Turtlebots (named as Turtlebot1 and Turtlebot2) were used. Four different types of scenarios were constructed by changing the availability of the second Turtlebot as 100\%, 75\%, 25\% and 0\%, and the scenarios are named as S1, S2, S3, and S4 in the figures, respectively. Four different instances of maps were used to conduct the experiments. Results of the experiments are depicted in Figs. \ref{exp1} and \ref{exp2}. Fig. \ref{exp1} exhibits the transfer of incentives from Turtlebot1 to Turtlebot2 for different scenarios. For S1, Turtlebot2 collected highest incentive and this got reversed with S4. The take away here is that the algorithm is working appropriately and transferring POIs based on availability. 

\begin{figure}[h!]
	\centering
	\includegraphics[scale=0.30]{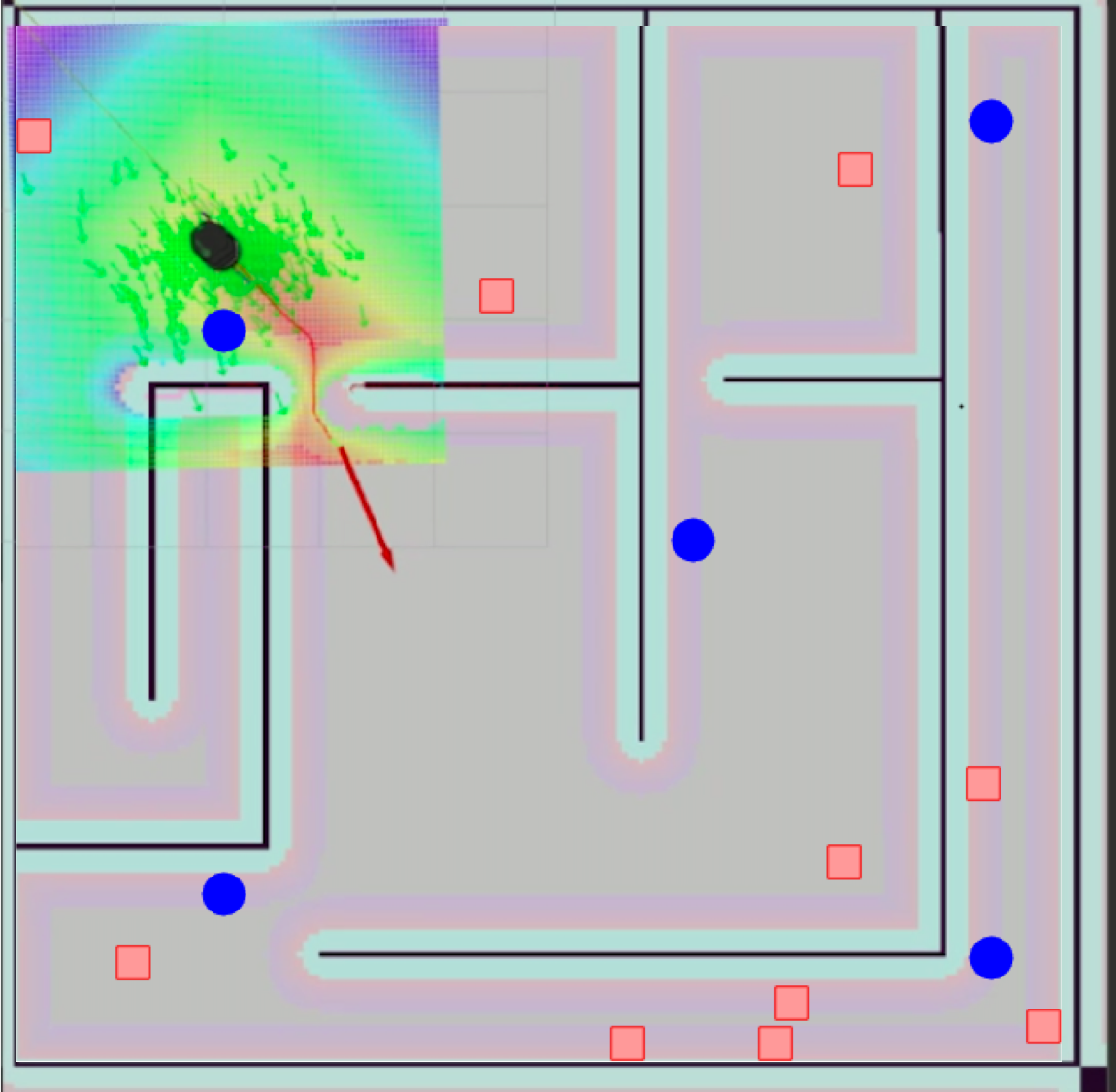}
	\caption{Screenshot of Rviz navigation using Turtlebot for one of the four instances. The blue ovals are recharging stations and the red squares are POIs, and the black oval is a Turtlebot}
	\label{ros}
\end{figure}

\begin{figure}[h!]
	\centering
	\includegraphics[scale=0.45]{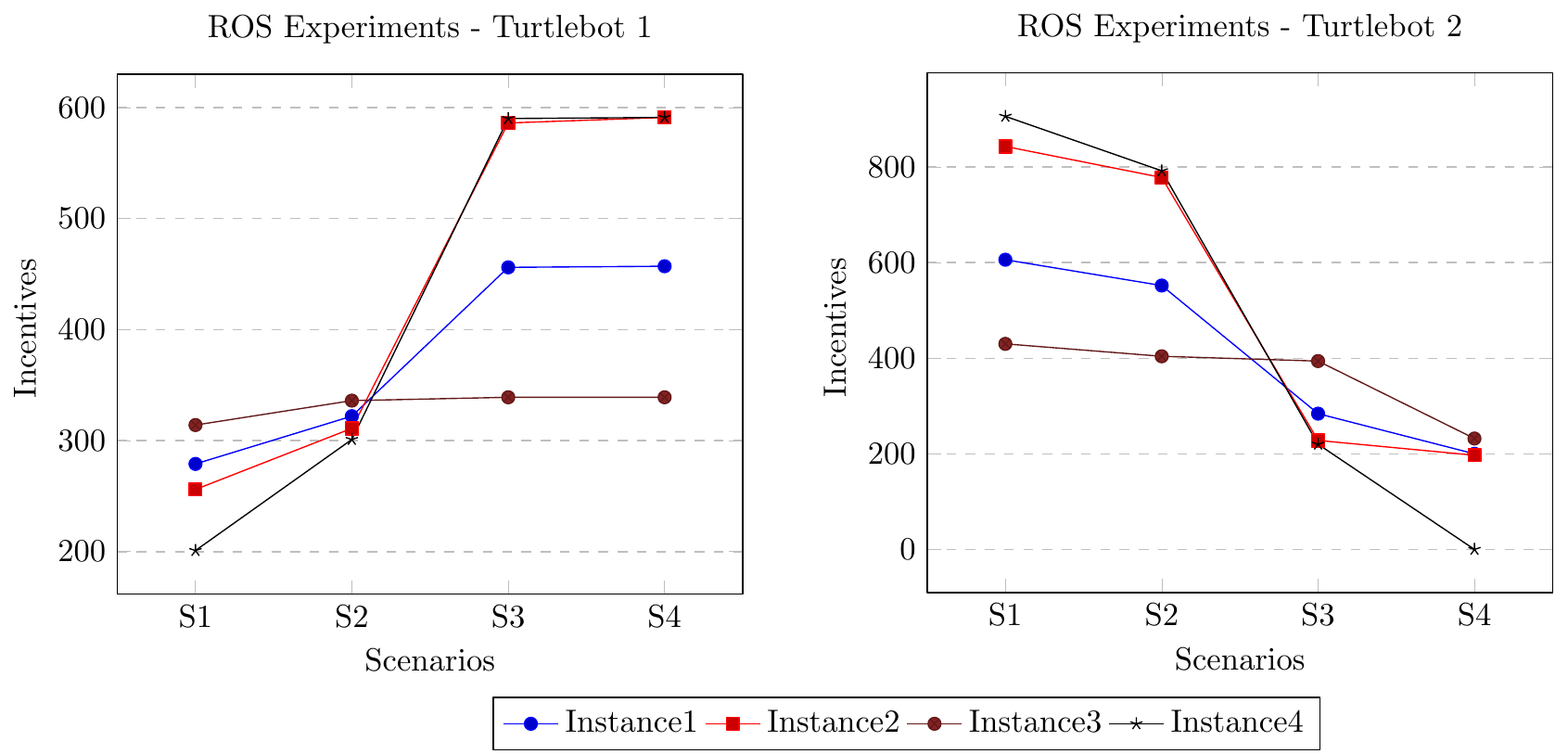}
	\caption{Results for four different scenarios. The incentives get shifted based on the  availability of the second Turtlebot}
	\label{exp1}
\end{figure}

\begin{figure}[h!]
	\centering
	\includegraphics[scale=0.45]{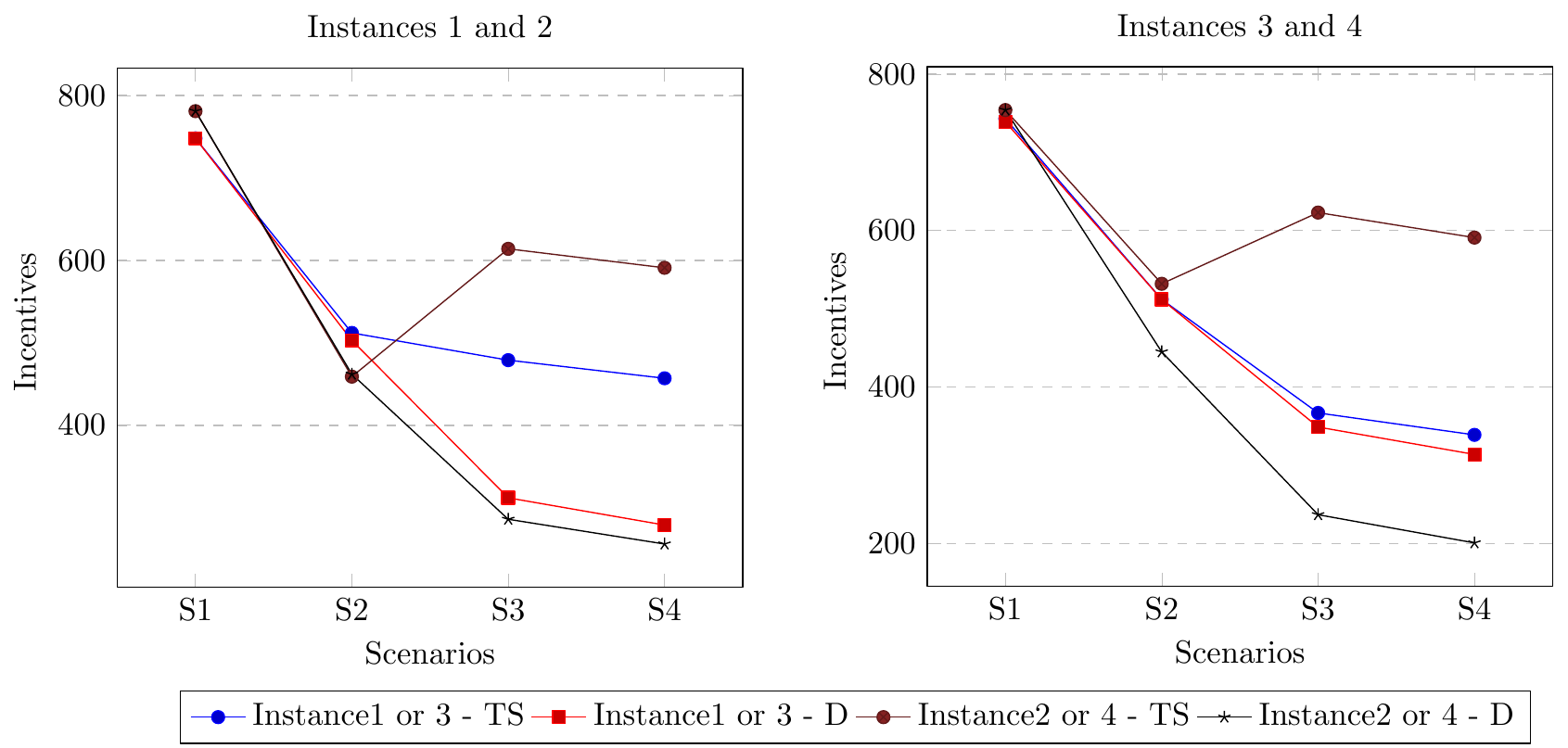}
	\caption{Comparison between the solutions using two-stage stochastic programming and deterministic models. `TS' and `D' represent two-stage and deterministic models, respectively}
	\label{exp2}
\end{figure}

\indent In the second set of experiments, we compared the results from two-stage model with a deterministic model. A deterministic model will have the objective function $ \sum_{\substack{(i,j)\in E \\ m \in M}} e_{j}^m y_{ij}^m$ and the constraints \eqref{eq:1}-\eqref{eq:13}. This is similar to estimating value of stochastic solution (VSS) \cite{birge1982value} in stochastic programming. In Fig. \ref{exp2}, `TS' and `D' represent solutions from two-stage and deterministic models, respectively. For example, the total incentives dropped from 457 units to 279 units for S4. On an average, the VSS for S2 is around 17\%, and for S3 and S4 is about 40\%. The take away here is that the stochastic model is adjusting based on the availability of the UVs to increase the incentives whereas the deterministic model results are a direct consequence of UV availability.

\section{Conclusion}\label{con}
\indent In this work, we address the fundamental issue of accounting for changes in availability of vehicles during the course of a mission, and present a two-stage stochastic programming model. The model determines routes for heterogeneous unmanned vehicles (UVs) in the presence of uncertainties in their availability as an effective means of planning UV missions to meet mission objectives despite the uncertainties. The model considers set of recharging stations, and the objective is to maximize the expected incentives collected by the UVs. Also, we present a reformulation so the model is amenable to decomposition algorithms. A modified version of the L-shaped algorithm is presented, and the value of reformulation and decomposition algorithm are evaluated using extensive computational experiments. Additionally, a ROS simulation environment is used to compare the plans suggested by two-stage model and deterministic model. This provides a `\textit{value of stochastic solution}' which are conventionally used to quantify the use of stochastic models instead of their deterministic counterpart. Future work will involve extending the two-stage model to consider other uncertainties like travel time or range. In terms of the decomposition algorithm, column generation approaches can be evaluated for the instances with higher number of UVs. Also, from a stochastic programming perspective, other risk-measures like conditional value at risk or expected excess can be considered whenever the uncertainties have a large dispersion.

\vspace{-3mm}

\begin{acknowledgements}
The authors like to acknowledge the technical and financial support of the Automotive Research Center (ARC) in accordance with Cooperative Agreement W56HZV-19-2-0001 U.S. Army CCDC Ground Vehicle Systems Center (GVSC) Warren, MI. Distribution A. Approved for public release; distribution is unlimited. OPSEC \# 4482.
\end{acknowledgements}

%
\section*{Conflict of interest}
The authors declare that they have no conflict of interest.

\bibliographystyle{spmpsci}      
\bibliography{Referencias,networks}   

\end{document}